\titleformat{\subsection}[runin]
{\bfseries} {\thesubsection{.}}{0.15cm}{}[.]
\titleformat{\subsubsection}[runin]
{\em}{\thesubsubsection{.}}{0.15cm}{}[.]
\newtheorem{theorem}{Theorem}[section]
\newtheorem{lemma}[theorem]{Lemma}
\theoremstyle{definition}
\newtheorem{definition}[theorem]{Definition}
\newtheorem{remark}[theorem]{Remark}
\numberwithin{equation}{section}
\numberwithin{figure}{section}
\newcommand\Ocal{\mathcal{O}}
\newcommand\C{\mathbb{C}}
\newcommand\D{\overline{\mathbb D}}
\renewcommand\D{\mathbb D}
\newcommand\R{\mathbb{R}}
\newcommand\igot{\mathfrak{i}}
\renewcommand\igot{\mathfrak{i}}
\renewcommand\imath{\igot}
\newcommand\hra{\hookrightarrow}
\newcommand\wt{\widetilde}
\newcommand\Aut{\mathrm{Aut}}
\begin{document}

\fancyhead[LO]{Proper holomorphic immersions into Stein manifolds with the density property}
\fancyhead[RE]{F.\ Forstneri\v c} 
\fancyhead[RO,LE]{\thepage}

\thispagestyle{empty}

\vspace*{1cm}
\begin{center}
{\bf\LARGE Proper holomorphic immersions into Stein manifolds with the density property}

\vspace*{0.5cm}

{\large\bf  Franc Forstneri{\v c}} 
\end{center}


\vspace*{1cm}

\begin{quote}
{\small
\noindent {\bf Abstract}\hspace*{0.1cm}
In this paper we prove that every Stein manifold $S$ admits a proper holomorphic immersion into any Stein manifold 
of dimension $2\dim S$ enjoying the density property or the volume density property. 

\vspace*{0.2cm}

\noindent{\bf Keywords}\hspace*{0.1cm} Stein manifold, density property, proper holomorphic immersion

\vspace*{0.1cm}

\noindent{\bf MSC (2010):} \hspace*{0.1cm} 32E10, 32E20, 32E30, 32H02; 32Q99}


\end{quote}


\vspace{1.5mm}

%
%
%
%
%
%
%
%
%
\section{Introduction}
\label{sec:Intro}
A complex manifold $X$ is said to enjoy the {\em density property} 
if the Lie algebra generated by all the $\C$-complete holomorphic vector fields is dense in the 
Lie algebra of all holomorphic vector fields on $X$ (see Varolin \cite{Varolin2001,Varolin2000}
or \cite[Sect.\ 4.10]{Forstneric2017E}). Similarly one defines the {\em volume density property} of a 
complex manifold endowed with a holomorphic volume form, as well as the algebraic versions of these
properties (see Kaliman and Kutzschebauch \cite{KalimanKutzschebauch2008IM}). 
This important class has been the focus of intensive recent research for affine algebraic and Stein manifolds; see 
the papers \cite{Andrist2018JGA,AndristKutzschebauchLind2015,AndristKutzschebauchPoloni2017PAMS,
KalimanKutzschebauch2017MA,KalimanKutzschebauch2015,KalimanKutzschebauch2016TG,
KutzschebauchLeuenberger2016,KutzschebauchLeuenbergerLiendo2015,Leuenberger2016PAMS}, among others.
These manifolds are highly symmetric and enjoy the Anders\'en-Lempert property
\cite{AndersenLempert1992} on approximation of isotopies of injective holomorphic maps between Runge domains 
by holomorphic automorphisms; see \cite[Theorem 1.1]{ForstnericRosay1993} for $\C^n$ 
and \cite[Theorem 4.10.5, p.\ 143]{Forstneric2017E}. Furthermore, every complex manifold with the 
density property is an {\em Oka manifold}; see \cite[Proposition 5.6.23, p.\ 223]{Forstneric2017E}. 

The following general embedding theorem has been discovered recently.

%
%
\begin{theorem}
\label{th:AFRW}
{\rm (Andrist et al. \cite[Theorem 1.1]{AndristFRW2016}.)}
Let $X$ be a Stein manifold with the (volume) density property. 
If $S$ is a Stein manifold and $2\dim S +1\le \dim X$, then any continuous map $S\to X$ is 
homotopic to a proper holomorphic embedding $S\hra X$. 
\end{theorem}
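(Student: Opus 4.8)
The plan is to combine the Oka principle with an inductive Anders\'en--Lempert construction: first deform the given map to a holomorphic one, then make it simultaneously proper and injective by repeatedly extending it over pieces of $S$ and pushing the newly added pieces off toward infinity in $X$ with holomorphic automorphisms. Since $X$ has the density property it is an Oka manifold, so the continuous map is homotopic to a holomorphic map $f_0\colon S\to X$, and it remains to deform $f_0$, through holomorphic maps, to a proper holomorphic embedding. Fix a strictly plurisubharmonic exhaustion of $S$ with associated exhaustion $D_1\Subset D_2\Subset\cdots$ by relatively compact, smoothly bounded, strongly pseudoconvex $\Ocal(S)$-convex domains, and an exhaustion $L_1\Subset L_2\Subset\cdots$ of $X$ by $\Ocal(X)$-convex compacts. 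I would construct holomorphic injective immersions $f_j$ on neighbourhoods of $\overline{D_j}$, together with the automorphisms relating consecutive stages, so that: (a) $\|f_j-f_{j-1}\|_{\overline{D_{j-2}}}<\epsilon_j$, with $\sum_j\epsilon_j<\infty$ and each $\epsilon_j$ chosen, after the $j$-th push, small relative to the data already produced; (b) $f_j(\overline{D_j}\setminus D_{j-2})\cap L_j=\emptyset$ and $f_j(\overline{D_j}\setminus D_{j-1})\cap L_{j+1}=\emptyset$; and (c) $f_j$ is homotopic to $f_0$ on $\overline{D_j}$. The $f_j$ then converge uniformly on compacts to a holomorphic map $f\colon S\to X$: condition (b) forces $f$ to be proper, (a) together with a general-position argument at each stage keeps $f$ an injective immersion (hence a proper holomorphic embedding), and the homotopies in (c) combine to show $f$ is homotopic to the original map. (The volume density case is identical, with volume-preserving automorphisms throughout.)

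The inductive step from $f_{j-1}$ to $f_j$ has two parts. (i) By the Oka property of $X$, with approximation on $\overline{D_{j-2}}$ and in the correct homotopy class, produce a holomorphic map on a neighbourhood of $\overline{D_j}$ that is close to $f_{j-1}$ on $\overline{D_{j-2}}$ and homotopic to $f_0$ there; since $2\dim S+1\le\dim X$, general position (jet and multi-jet transversality for holomorphic maps from Stein manifolds) allows a further small perturbation, still close to $f_{j-1}$ on $\overline{D_{j-2}}$, making it an injective immersion on $\overline{D_j}$. (ii) The resulting map need not satisfy (b), so compose it with an automorphism $\Phi_j\in\Aut(X)$ coming from the Anders\'en--Lempert theorem for $X$: take $\Phi_j$ to approximate an isotopy of biholomorphisms between Runge domains that is the identity near $f_{j-1}(\overline{D_{j-2}})$ and slides the image of $\overline{D_j}\setminus D_{j-2}$ out past $L_j$, with the outermost collar $\overline{D_j}\setminus D_{j-1}$ carried past $L_{j+1}$. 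Such an isotopy exists because that image is an embedded Stein submanifold-with-boundary of $X$ of codimension $\dim X-\dim S\ge\dim S+1$, so there is ample room to slide it while keeping the base fixed; one first arranges, by a generic perturbation, that the relevant compact pieces in $X$ are $\Ocal(X)$-convex, so that the isotopy runs between Runge domains and the theorem applies.

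The main obstacle is making the bookkeeping in (b) and (ii) cohere. Once $f_{j-1}$ is fixed, the image $f_{j-1}(\partial D_{j-2})$ is frozen up to $\epsilon_j$, so the push at stage $j$ must be compatible with it; but $\partial D_{j-2}$ lies in the stage-$(j-1)$ collar $\overline{D_{j-1}}\setminus D_{j-2}$, where the second clause of (b) has already placed the image outside $L_j$, so driving $f_j(\overline{D_j}\setminus D_{j-2})$ outside $L_j$ is feasible. This is precisely why the maps are pinned only two exhaustion steps back — so that the inner boundary $\partial D_{j-1}$ of the stage-$j$ collar remains free to be pushed as far out as needed — and why (b) must cascade, each stage overshooting by one level on its outermost collar in order to supply the margin used at the next. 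Equally delicate are the geometric inputs to (ii): that the compact pieces of $f_j(\overline{D_j})$ can be made $\Ocal(X)$-convex and that the sliding isotopies exist, where the large codimension $\dim X-\dim S\ge\dim S+1$ is indispensable, together with the choice of each $\epsilon_j$ after the corresponding push, small enough that no later perturbation can undo properness or injectivity. Granting all this, the limit map is the desired proper holomorphic embedding in the prescribed homotopy class.
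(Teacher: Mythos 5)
Your overall plan (Oka principle to get a holomorphic representative, then an induction over an exhaustion of $S$, using general position for injectivity and Anders\'en--Lempert automorphisms to keep images of collars away from an exhaustion $L_j$ of $X$) is the right family of ideas, but the two points you yourself flag as delicate are precisely where the argument has genuine gaps. The first is the claim that one can arrange ``by a generic perturbation'' that the relevant compact pieces of the image are $\Ocal(X)$-convex. This is not a generic property: for a compact piece $f_j(\overline{D_j})$ that is embedded but not part of a \emph{properly} embedded submanifold of $X$, the $\Ocal(X)$-hull can be strictly larger than the image, and no small perturbation changes that; without this convexity you have no Runge domains on which to run the Anders\'en--Lempert theorem. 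In the actual proof of \cite[Theorem 1.1]{AndristFRW2016} (and of Lemma \ref{lem:main} here) this is resolved by first approximating the map on a Runge strongly pseudoconvex domain $W_0\Subset W$ by a \emph{proper} holomorphic embedding $g\colon W_0\to X$, via the Drinovec Drnov\v sek--Forstneri\v c theorem \cite{BDF2010}, chosen to lift a strictly plurisubharmonic exhaustion $\sigma$ of $X$ ($\sigma\circ g>\sigma\circ f-\epsilon$); properness is what makes images of $\Ocal(W_0)$-convex compacts $\Ocal(X)$-convex (cf.\ Lemma \ref{lem:holoconvex}), and the lifting of $\sigma$ is also the real engine forcing $g^{-1}(L)\subset \mathring K$, i.e.\ the properness of the limit map. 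Your scheme offers no substitute for this step; the automorphism push alone is asked to do work it cannot do.

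The second gap is in step (ii) itself: the collar $\overline{D_j}\setminus D_{j-2}$ is attached to $\overline{D_{j-2}}$, so an isotopy that is the identity near $f(\overline{D_{j-2}})$ (and does not disturb what must be preserved) while carrying the outer part of the collar past $L_j$ can only be defined on two \emph{disjoint} Runge pieces --- a neighborhood of the fixed data and a tube around the moving part. On the transition zone in between, the approximating automorphism $\Phi_j$ is completely uncontrolled, so $\Phi_j\circ f$ may re-enter $L_j$ exactly on the portion of the collar that condition (b) requires you to control; asserting the existence of the sliding isotopy from ``ample codimension'' does not address this. The published proof avoids the difficulty by never pushing an already-attached collar with a global automorphism: it decomposes the extension into (very) special Cartan pairs $(A,B)$, builds the new map on the bump $B$ separately (conjugating a contraction of a convex tube around $f(C)$ by the thickened map $F$ and approximating by an automorphism close to the identity near $L'$), and then glues the two maps by a splitting lemma, so that on the attaching region the new map is uniformly close to the old one, which already avoided $L$ there. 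Without such a gluing mechanism, and without the convexity statements from the first point that make the relevant unions $\Ocal(X)$-convex and the domains Runge, the induction as you have set it up does not close.
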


In this paper we complete the picture by proving the corresponding result on the existence
of proper holomorphic immersions into manifolds of double dimension.

%
%
\begin{theorem}
\label{th:main}
Let $X$ be a Stein manifold enjoying the density property or the volume density property. 
If $S$ is a Stein manifold with $2\dim S=\dim X$, then any continuous map $S\to X$ is 
homotopic to a proper holomorphic immersion with simple double points.
\end{theorem}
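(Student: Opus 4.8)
The plan is to imitate the strategy behind Theorem~\ref{th:AFRW}: losing one complex dimension forces us to replace ``embedding'' by ``immersion with simple double points'', and we then carry such a map to a proper one by a ``pushing to infinity'' induction driven by the Andersén--Lempert theorem, which is available precisely because $X$ has the (volume) density property. Write $n=\dim S$, so $\dim X=2n$. \textbf{Step 1 (reduction to a holomorphic immersion).} Since $X$ has the (volume) density property it is an Oka manifold, and since $\dim X=2n$ the complex-linear injections $\C^n\to\C^{2n}$ form a dense open subset of all linear maps, so the given continuous map admits a formal holomorphic immersion structure; by the Oka principle for holomorphic immersions of Stein manifolds into Oka manifolds of dimension $\ge 2\dim S$ (parametric h-principle together with jet transversality, \cite{Forstneric2017E}) it is then homotopic to a holomorphic immersion $f_0\colon S\to X$. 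A further generic $\mathcal C^\infty$-small perturbation within the class of holomorphic immersions (general position on the Stein manifold $S$) arranges that the self-intersection locus of $f_0$ in $S\times S\setminus\Delta$ is a closed discrete set along which the two local sheets meet transversally; that is, $f_0$ is a holomorphic immersion with simple double points, still in the prescribed homotopy class.

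\textbf{Step 2 (pushing to infinity).} Fix strictly plurisubharmonic exhaustions $\rho$ on $S$ and $\tau$ on $X$ and an exhaustion $K_1\Subset K_2\Subset\cdots$ of $S$ by compact $\Ocal(S)$-convex sublevel sets of $\rho$. I construct holomorphic immersions $f_j\colon S\to X$ with simple double points, all homotopic to $f_0$, and numbers $\epsilon_j>0$ with $\sum_j\epsilon_j<\infty$, so that $\sup_{K_j}\dist(f_{j+1},f_j)<\epsilon_j$ while a shell lying strictly outside $K_j$ is mapped by $f_j$ into $\{\tau>j\}$; the bookkeeping inserts a \emph{buffer shell} between the part of $S$ on which $f_{j+1}$ must stay $\epsilon_j$-close to $f_j$ and the shell whose image is pushed outward, so that the level estimates survive all later steps and the limit $f=\lim_j f_j$ satisfies $\tau\circ f\to\infty$ as one leaves compact subsets of $S$. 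The inductive step from $f_j$ to $f_{j+1}$ is where the density property enters: after adapting the exhaustion to $f_j$ and, if necessary, perturbing $f_j$ slightly so that its double points avoid the transition region, one builds an isotopy of injective holomorphic maps near a suitable $\Ocal(X)$-convex compact containing the images of the core and of the relevant shells, equal to the inclusion near $f_j(K_j)$ and carrying the outer shell's image into $\{\tau>j+1\}$ without letting $\tau$ drop on the already-placed shells; by the Andersén--Lempert theorem for manifolds with the (volume) density property (\cite[Theorem~4.10.5]{Forstneric2017E}) this isotopy is approximated, uniformly on that compact, by an automorphism $\Phi_{j+1}\in\Aut(X)$, which is moreover isotopic to $\Id_X$ since it is a finite composition of time-$t$ flows of $\C$-complete holomorphic vector fields. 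Put $f_{j+1}:=\Phi_{j+1}\circ f_j$ (corrected by the small perturbation if one was used). As $\Phi_{j+1}$ is injective, $f_{j+1}(x)=f_{j+1}(y)$ if and only if $f_j(x)=f_j(y)$, so composing with an automorphism preserves being an immersion, preserves the double-point locus in $S\times S$, and preserves the homotopy class; only the controlled small perturbations move the double points, and only slightly.

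\textbf{Step 3 (the limit).} If at stage $j$ all subsequent $\epsilon_i$ $(i\ge j)$ are taken small enough in $\mathcal C^1$ relative to the non-degeneracy of $df_j$ and to the separation of the sheets of $f_j$ over $K_j$ away from its finitely many double points there, then $(f_j)$ converges uniformly on compact sets to a holomorphic map $f\colon S\to X$ which is an immersion with exactly the (simple) double points of $f_0$ --- no tangency can develop and no new intersection can appear in the limit --- and which is proper by the level bookkeeping. Since each $f_j$ is homotopic to $f_0$ and $f$ agrees with $f_j$ up to $\sum_{i\ge j}\epsilon_i$ on $K_j$ for every $j$, the map $f$ is homotopic to $f_0$, hence to the original continuous map, as required.

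\textbf{Main obstacle.} Step~1 is standard Oka theory, and the observation that automorphisms preserve double points makes the ``immersion with simple double points'' bookkeeping in Step~2 nearly automatic once the push is in place. The real difficulty is setting up the push: one must simultaneously (a) keep the compacta on which Andersén--Lempert is applied $\Ocal(X)$-convex even though $f_j$ is not injective --- its image being an immersed complex submanifold with normal crossings rather than a closed submanifold --- which forces the exhaustion of $S$ to be chosen adapted to the current $f_j$ rather than once and for all; (b) route the finitely many double points in each compactum away from the transition shell, or else isotope a neighbourhood of a normal-crossing configuration; and (c) arrange the shell/buffer-shell bookkeeping so that pushing a shell out to a high level of $\tau$ disturbs neither the image of the fixed core nor the previously-placed shells --- this last point being exactly what makes the limit proper. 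Reconciling (a)--(c) at every step of the induction is the delicate core of the argument.
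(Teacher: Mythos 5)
Your Step 2 is not a proof but a restatement of the problem. The entire difficulty is the existence, at each stage, of an isotopy of injective holomorphic maps on a Runge domain in $X$ which is the identity near $f_j(K_j)$, carries the image of the next shell into $\{\tau>j+1\}$, and keeps all intermediate images Runge; you assert this isotopy and the ``suitable $\Ocal(X)$-convex compact containing the images of the core and of the relevant shells'', but for your scheme neither exists in general. The reason is that your $f_j\colon S\to X$ is a globally defined \emph{non-proper} immersion: images of compact $\Ocal(S)$-convex sets then need not be $\Ocal(X)$-convex (the hull equivalence of Lemma \ref{lem:holoconvex} requires the immersion to be proper with simple double points, so that the image is a closed subvariety), the image of the uncontrolled far end $f_j(S\setminus K_{j+1})$ can cluster on any neighbourhood of $f_j(K_j)$, and the shell's image is in general not even disjoint from the core's image (double points joining the shell to the core), so no ambient isotopy can simultaneously fix one and move the other. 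Your suggestion to ``route the finitely many double points away from the transition shell'' cannot work for shells exhausting $S$: the double points must lie in some shell, and wherever they lie, the Andersén--Lempert input has to be built around a self-intersecting image. Your closing paragraph in effect concedes that reconciling (a)--(c) is unresolved; but that reconciliation \emph{is} the theorem.

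The paper avoids these obstructions by never holding a global non-proper immersion. At each step the map is defined only near the current compact $A\subset S$; it is first replaced, using \cite[Theorem 1.1]{BDF2010}, by an immersion that is \emph{proper from a strongly pseudoconvex domain} $W_0\Supset A$ into $X$, so that Lemma \ref{lem:holoconvex} yields the $\Ocal(X)$-convexity of $f(A)$, $f(C)$, $L'=L\cup f(K)$, etc. The extension across a convex bump $B$ then uses the new device of a \emph{very special Cartan pair}: the attaching set $C=A\cap B$ is an arbitrarily thin convex slab chosen to miss the finitely many double points, so $f|_C$ is an embedding and admits an injective thickening $F$ on $U_1\times\D^{n-d}$; the Andersén--Lempert theorem is applied to the isotopy obtained by conjugating a linear contraction of $B$ through $F$ (kept equal to the identity on a Runge neighbourhood of $L'$), and the two maps are glued by \cite[Theorem 4.1]{Forstneric2003Acta}. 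Properness comes from maintaining the collar condition \eqref{eq:collar2} with an exhausting sequence of $\Ocal(X)$-convex sets $L$, and the homotopy class and critical points of the exhaustion of $S$ are handled as in \cite{AndristFRW2016}. (Your Step 1 is roughly defensible---Oka principle for maps plus the jet transversality theorem for maps to Oka manifolds---but it is not needed in this scheme, and it cannot be repaired into a proof along your lines without supplying exactly the ingredients above.)
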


An immersion $f\colon S\to X$ is said to have {\em simple double points}
if for any pair of distinct points $p,q\in S$ with $f(p)= f(q)$ the tangent 
planes $df_p(T_{p}S)$ and $df_q(T_{q}S)$ intersect trivially within $T_{f(p)}X$, and $f$ has no triple points.
Clearly, any such pair $(p,q)$ is isolated, and if $f$ is proper then the sequence $(p_j,q_j)\in S\times S$ 
of pairs of double points is such that each of the sequences $p_j$ and $q_j$ discrete in $S$.

The special case of Theorem \ref{th:main} with $X=\C^{2\dim S}$ is a classical 
theorem of Bishop \cite{Bishop1961} and Narasimhan \cite{Narasimhan1960AJM}.
When $S$ is an open Riemann surface and $\dim X=2$, Theorem \ref{th:main} was proved 
beforehand by Andrist and Wold \cite{AndristWold2014}. 

Let us mention a few related open problems.
Assume that $S$ and $X$ are Stein manifolds and $X$ has the (volume) density property.
Can one always find a proper holomorphic map $S\to X$ when $\dim S <\dim X$? 
This is possible for $X=\C^n$ according to the papers of Bishop and Narasimhan cited above. 
The second question is whether the immersion and the embedding dimensions 
in the above theorems can be lowered. Recall that every Stein manifold of dimension $d\ge 1$ 
admits a proper holomorphic immersion into the complex Euclidean space of dimension
$\left[\frac{3d+1}{2}\right]$, and if $d>1$ then it admits a proper holomorphic embedding into 
$\C^{\left[\frac{3d}{2}\right]+1}$ (see Eliashberg and Gromov \cite{EliashbergGromov1992} and 
Sch\"urmann \cite{Schurmann}). Examples of Forster \cite{Forster1970} show that this embedding
dimension is minimal for every $d>1$, and the immersion dimension is minimal for even values of $d$ 
and is off by at most one for odd values of $d$. The optimal non-proper immersion dimension is $\left[\frac{3d}{2}\right]$. 
Their proofs rely on the product structure of $\C^n$ and do not generalize to more general target manifolds.
The problem whether every open Riemann surface embeds properly holomorphically into $\C^2$
is still open, although considerable progress has been made in the last decade.
A discussion of these topics can be found in \cite[Chap.\ 9]{Forstneric2017E}.

%
%
The paper is organized as follows. In Sect.\ \ref{sec:preliminaries} we collect some preliminaries
and develop the notion of a {\em very special Cartan pair}
which plays an important role in the proof. In Sect.\ \ref{sec:mainlemma} we prove the main technical 
result, Lemma \ref{lem:main}. Although it is similar to \cite[Lemma 2.2]{AndristFRW2016},
its proof strongly relies on the fact that the attaching set of the convex bump in a very special Cartan pair
can be an arbitrarily thin convex slab. This allows us to ensure that the immersion into $X$, 
given in an induction step of the proof, is an embedding on the attaching set of the bump. 
This condition is  important in the rest of the argument where the (volume) 
density property of $X$ is used to approximate the given immersion 
on the attaching set by an immersion of the bump. The rest of the procedure,  
gluing these two maps etc., is the same as in \cite[proof of Lemma 2.2]{AndristFRW2016}.
With Lemma \ref{lem:main} in hand, Theorem \ref{th:main} is proved in exactly the same way as 
\cite[Theorem 1.1]{AndristFRW2016}.

\begin{remark}
By using the technique of this paper, it is possible to obtain a more precise version of Theorem \ref{th:main} in 
the spirit of \cite[Theorem 15]{ForstnericRitter2014} and \cite[Theorem 1.2]{AndristFRW2016},
i.e., with interpolation and the control of the image of the set $S\setminus K$ for a given 
compact $\Ocal(S)$-convex subset $K$ of the source Stein manifold $S$.
More precisely, assume that $S$ and $X$ are as in Theorem \ref{th:main},
$K\subset S$ is a compact $\Ocal(S)$-convex set, $U\subset S$ is a open set containing $K$, 
$S' \subset S$ is a closed complex subvariety of $S$, $L\subset X$ is  
a compact $\Ocal(X)$-convex set, and $f\colon U\cup S'\to X$ is a holomorphic map such that 
$f:S'\to X$ is an immersion with simple double points satisfying $f(bK \cup (S'\setminus K)) \cap L =\emptyset$. 
Then we can approximate $f$ uniformly on $K$ by a holomorphic immersion $F\colon S\to X$ 
with simple double points satisfying $F(S\setminus K) \subset X \setminus L$ and $F|_{S'}=f|_{S'}$. 
If in addition the map $f|_{S'} \colon S'\to X$ is proper (in particular, if $S'=\emptyset$),
then $F$ can also be chosen proper. 
\end{remark}

%
%
\begin{remark}
Lemma \ref{lem:main} also serves to complete the details in 
the proof of \cite[Theorem 15]{ForstnericRitter2014}, due to T.\ Ritter and the author,
in the second case when the compact set $L\subset \C^n$ in the statement of that theorem 
is polynomially convex and $n=2\dim X$.
(In \cite{ForstnericRitter2014}, $X$ denotes the source Stein manifold which 
corresponds to $S$ in the present paper, while the target manifold is $\C^n$.)
The proof of \cite[Theorem 15]{ForstnericRitter2014} is complete when $L$ is convex or 
holomorphically contractible, while the case when $L$ is polynomially convex 
and $n=2\dim X$ can be seen by supplementing the proof in \cite{ForstnericRitter2014}
by Lemma \ref{lem:main} in the present paper.
\end{remark}

%
%

\section{Preliminaries}
\label{sec:preliminaries}
Let  $\Ocal(S)$ denote the algebra of all holomorphic functions on a complex manifold $S$, 
endowed with the compact-open topology. A compact set $K$ in $S$ is said to be {\em $\Ocal(S)$-convex} 
if for every point $p\in S\setminus K$ there exists a function $g\in \Ocal(S)$ with $|g(p)| > \sup_K |g|$. 

If $S$ is a closed complex subvariety of a Stein manifold $X$, then a compact set
$K\subset S$ is $\Ocal(S)$-convex of and only if it is $\Ocal(X)$-convex.
We shall need a version of this result for immersed submanifolds with simple double points.

\begin{lemma} \label{lem:holoconvex}
Assume that $S$ and $X$ are Stein manifolds and $f\colon S\to X$ is a proper 
holomorphic immersion with only simple double points. 
Then a compact subset $K\subset S$ is $\Ocal(S)$-convex if and only if
its image $f(K)\subset X$ is $\Ocal(X)$-convex.
\end{lemma}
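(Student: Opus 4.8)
The plan is to reduce the statement to the classical fact that holomorphic convexity is preserved under closed embeddings, by working with the normalization of the image. First I would observe that since $f\colon S\to X$ is a proper immersion with only simple double points, the image $\Sigma:=f(S)$ is a closed complex subvariety of $X$ whose only singularities are transverse double points (nodes), and $f\colon S\to\Sigma$ is precisely the normalization map; it is finite, proper, and at most two-to-one, with the double-point set being a discrete subset of $\Sigma$ and its preimage discrete in $S$. Since $\Sigma$ is a closed subvariety of the Stein manifold $X$, a compact set $L\subset\Sigma$ is $\Ocal(X)$-convex if and only if it is $\Ocal(\Sigma)$-convex (here $\Ocal(\Sigma)$ denotes holomorphic functions on the Stein space $\Sigma$); this is the standard fact quoted in the paragraph preceding the lemma, valid for singular subvarieties as well. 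So the content of the lemma is the equivalence: $K\subset S$ is $\Ocal(S)$-convex $\iff$ $f(K)\subset\Sigma$ is $\Ocal(\Sigma)$-convex, i.e.\ holomorphic convexity is preserved under the finite normalization map $f$.

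For the direction that is automatic, suppose $f(K)$ is $\Ocal(\Sigma)$-convex. If $p\in S\setminus K$, I want $g\in\Ocal(S)$ with $|g(p)|>\sup_K|g|$. If $f(p)\notin f(K)$, pull back a separating function from $\Sigma$. If $f(p)\in f(K)$, then $f(p)$ is a double point, say $f^{-1}(f(p))=\{p,p'\}$ with $p'\in K$; here one uses that $S$ is Stein to find $g\in\Ocal(S)$ with $g(p')\in f(K)$-side small and $g(p)$ large — more precisely, pick $g$ vanishing to high order along a neighborhood of $f^{-1}(f(K))$ inside... this needs care, so instead I would argue: the restriction $f|_{f^{-1}(f(K))}$ is finite onto $f(K)$, hence $f^{-1}(f(K))$ is compact; a compact set in a Stein manifold is $\Ocal(S)$-convex iff... no. The clean route is: $f^{-1}(f(K))=K\cup K'$ where $K'$ is the (compact) set of ``partners'' of the double points lying over $f(K)$; one shows $f^{-1}(f(K))$ is $\Ocal(S)$-convex because $f(K)$ is, using that $f$ is finite (finite maps pull back $\Ocal$-convex sets to $\Ocal$-convex sets — this is a standard consequence of the finite map having a Stein image). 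Then one separates $K$ from $p$ inside the $\Ocal(S)$-convex hull: since $K$ and $K'$ are disjoint compacta in the Stein manifold $S$ and $K$ is to be shown $\Ocal(S)$-convex, I use a function on $S$ separating $p$ from $K\cup K'$ and then (if needed) add a correction supported near $K'$.

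The main obstacle — and the only real point — is the forward direction: assuming $K$ is $\Ocal(S)$-convex, show $f(K)$ is $\Ocal(\Sigma)$-convex. Here I would argue contrapositively. Suppose $y\in\Sigma\setminus f(K)$ lies in the $\Ocal(\Sigma)$-convex hull of $f(K)$. Choose $x\in S$ with $f(x)=y$; then $x\notin K$, so by $\Ocal(S)$-convexity there is $h\in\Ocal(S)$ with $|h(x)|>\sup_K|h|$. The difficulty is that $h$ need not descend to $\Sigma$, since $h$ may take different values at the two preimages of a double point. I would fix this by the standard two-preimage trick: let $D\subset\Sigma$ be the (discrete) double-point set and for each $w\in D$ write $f^{-1}(w)=\{a_w,b_w\}$. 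Using that $S$ is Stein and $\{a_w,b_w\}_w$ is discrete with all these points distinct, one can choose $h$ in addition to satisfy $h(a_w)=h(b_w)$ for all $w\in D$ — indeed the conditions $h(a_w)=h(b_w)$ define a closed complex subvariety-with-multiplicity of $\Ocal(S)$ and, because $f^{-1}(f(K))$ is $\Ocal(S)$-convex (from the previous paragraph) and contains no partner pair straddling $\partial(\text{hull})$ in a bad way, a Cartan/Oka--Weil argument on $S$ (or directly: the sheaf $f_*\Ocal_S$ modulo $\Ocal_\Sigma$ is a skyscraper supported on $D$, which is coherent and has vanishing $H^1$ on the Stein space $\Sigma$) produces such an $h$ approximating the original one on $f^{-1}(f(K))$ while keeping $|h(x)|>\sup_K|h|$. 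Then $h$ descends to $g\in\Ocal(\Sigma)$ with $|g(y)|>\sup_{f(K)}|g|$, contradicting $y$ being in the hull. Thus $f(K)$ is $\Ocal(\Sigma)$-convex, completing the proof; the whole argument is really the observation that normalization of a nodal subvariety is modeled by the exact sequence $0\to\Ocal_\Sigma\to f_*\Ocal_S\to\bigoplus_{w\in D}\C_w\to 0$ on a Stein space, so $H^1$ vanishes and everything lifts.
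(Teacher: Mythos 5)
Your overall strategy coincides with the paper's: view $\Sigma=f(S)$ as a closed nodal subvariety of $X$ with $f$ its normalization, reduce $\Ocal(X)$-convexity of $f(K)$ to $\Ocal(\Sigma)$-convexity, treat the easy direction via the fact that $f^{-1}(f(K))=K\cup\{p_1,\dots,p_m\}$ (finitely many partner points) is $\Ocal(S)$-convex together with Oka--Weil, and treat the main direction by producing functions on $S$ taking equal values at the two preimages of every node, so that they descend to $\Sigma$. The converse direction is fine as you sketch it, except that the phrase ``a correction supported near $K'$'' should simply be Oka--Weil approximation, on the $\Ocal(S)$-convex compact $K\cup K'$, of a function equal to $0$ near $K$ and $1$ near the finitely many points of $K'$; this is what the paper does.

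The mechanism you propose for the main direction, however, contains a genuine flaw. You start from $h\in\Ocal(S)$ with $|h(x)|>\sup_K|h|$ and claim to correct it so that $h(a_w)=h(b_w)$ at all node pairs \emph{while approximating the original $h$ on $f^{-1}(f(K))$}. This is impossible in general: a node pair may have one or both preimages in $K$ (and then both preimages lie in $f^{-1}(f(K))$), and if $h(a_w)\neq h(b_w)$ by a definite amount, no function matching at that pair can be close to $h$ at both of its points. The exact sequence $0\to\Ocal_\Sigma\to f_*\Ocal_S\to\bigoplus_w\C_w\to 0$ only gives surjectivity of the difference map, with no sup-norm control on $K$. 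In addition, your justification appeals to the $\Ocal(S)$-convexity of $f^{-1}(f(K))$ ``from the previous paragraph'', but there that convexity was deduced from the hypothesis of the \emph{other} implication (that $f(K)$ is $\Ocal(X)$-convex), so the appeal is circular in this direction. The repair is exactly the paper's argument: do not modify a given $h$, but apply Cartan--Oka--Weil directly to obtain $g\in\Ocal(S)$ with $|g|<1/2$ on $K$, $g=1$ at the preimage(s) of the point $y\in\Sigma\setminus f(K)$ to be separated, and a \emph{prescribed common value} (say $0$) at every node pair; this prescription is consistent because at pairs meeting $K$ the prescribed value is small. Then $g$ descends to some $\tilde g\in\Ocal(\Sigma)$, and the one observation you need --- absent from your write-up --- is that every point of $f(K)$ has at least one preimage in $K$, whence $\sup_{f(K)}|\tilde g|\le\sup_K|g|<1/2<1=|\tilde g(y)|$; no control at the partner points outside $K$ is required.
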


\begin{proof}
Let $\Delta_S$ denote the diagonal of $S\times S$.
The hypothesis on $f$ implies that there are at most a countably many pairs $(a_j,b_j)\in S\times S\setminus\Delta_S$ 
such that the sequences $a_j$ and $b_j$ are discrete in $S$, $f(a_j)=f(b_j)$ for all $j$, and any 
$(a,b)\in S\times S\setminus \Delta_S$ satisfying $f(a)=f(b)$ is one of the pairs $(a_j,b_j)$. 
The image $\Sigma=f(S)\subset X$ is a closed complex subvariety of $X$ whose only 
singularities are simple normal crossings at the points $c_j=f(a_j)=f(b_j)$.
Since $f$ is proper, the sequence $c_j\in \Sigma$ is discrete.

Assume that the compact set $K\subset S$ is $\Ocal(S)$-convex.
Let $p\in \Sigma\setminus f(K)$. If $p\ne c_j$ for all $j$,  then $p=f(q)$ for a unique point
$q\in S\setminus K$. Since the sequences $a_j,b_j$ are discrete in $S$,
the Cartan-Oka-Weil theorem gives a function $g\in \Ocal(S)$ such that $g(q)=1$, $|g|<1/2$ on $K$, 
and $g(a_j)=g(b_j)$ for all $j$. Hence there is a unique holomorphic function 
$h\in\Ocal(\Sigma)$ such that $h\circ f=g$. Then $h(p)=1$ and $|h|<1/2$ on $f(K)$, so 
$p\notin \widehat{f(K)}_{\Ocal(\Sigma)}$. If $p=c_j$ for some $j$, 
we choose $g\in\Ocal(S)$ such that $g(a_j)=g(b_j)=1$, $|g|<1/2$ on $K$,
and $g(a_i)=g(b_i)$ for all $i\ne j$; the conclusion is the same as above.
This proves that $f(K)$ is $\Ocal(\Sigma)$-convex, and hence also $\Ocal(X)$-convex.

Conversely, if $K\subset S$ is a compact set such that $f(K)\subset X$ is $\Ocal(X)$-convex,
then $\wt K=f^{-1}(f(K))$ is $\Ocal(S)$-convex. The condition on $f$ implies that $\wt K$ is the union
of $K$ with at most finitely many points $p_1,\ldots, p_m\in S\setminus K$. By the Oka-Weil theorem 
there exists $g\in \Ocal(S)$ such that $g(p_i)$ is close to $1$ for $i=1,\ldots, m$ and $|g|<1/2$ on $K$. 
Hence the points $p_i$ do not belong to the hull of $K$, so $K$ is $\Ocal(S)$-convex. 
\end{proof}

%
%

A compact set $K$ in a topological space $S$ is said to be {\em regular}
if $K$ is the closure of its interior $\mathring K$.

\begin{definition}\label{def:SPP}
A pair $K\subset L$ of compact convex sets in $\R^N$ is a {\em simple convex pair} if 
there are a linear function $\lambda\colon\R^N\to\R$ and constant $a\in\R$ such that
\begin{equation}\label{eq:KL}
	K=\{z\in L : \lambda(z)\le a\}.
\end{equation}
\end{definition}

\begin{lemma}\label{lem:SCPs}
Given regular compact convex sets $C\subset B$  in $\R^N$ and an open set $U\subset \R^N$ containing $C$,
there is a finite sequence of regular compact convex sets $K_1\subset K_{2} \subset\cdots\subset K_{m+1}=B$
such that $C \subset K_1\subset U$ and $(K_{i},K_{i+1})$ is a simple convex pair for every $i=1,\ldots,m$.
\end{lemma}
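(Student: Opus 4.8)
The plan is to realize the passage from $C$ to $B$ by a finite succession of ``thin slab'' additions, each of which attaches to the current set exactly one half-space slice cut off from $B$. First I would fix a large closed ball $\mathbb{B}_R$ containing $B$, and cover the compact difference $B\setminus \mathring U$ by finitely many open half-spaces $H_1,\dots,H_m$ of the form $H_j=\{\lambda_j<a_j\}$ with the property that $H_j\cap C=\emptyset$ for each $j$ (this is possible: for each point $p\in B\setminus \mathring U$, since $C$ is convex and compact and $p\notin C$, the Hahn--Banach separation theorem gives a linear functional $\lambda_p$ and a constant strictly separating $p$ from a neighbourhood of $C$; extract a finite subcover). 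Replacing $H_j$ by $\{\lambda_j\le a_j'\}$ for a slightly smaller $a_j'$, and shrinking, I may assume the closed slabs $\overline{H_j}\cap B$ are pairwise-usable and still avoid $C$.

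Next I would define $K_1=\overline{\big(B\setminus(H_1\cup\cdots\cup H_m)\big)}$. Since each $H_j$ is an open half-space, $B\setminus(H_1\cup\cdots\cup H_m)=B\cap\bigcap_j\{\lambda_j\ge a_j\}$ is an intersection of $B$ with finitely many closed half-spaces, hence compact and convex; it is regular because it is the intersection of the regular convex body $B$ with the closure of an open convex set whose intersection with $\mathring B$ is nonempty (one should check this last point — it is where regularity of the $K_i$ must be argued with a little care). Moreover $C\subset K_1$ because each $H_j$ misses $C$, and $K_1\subset B\setminus\bigcup_j H_j\subset U$ because the $H_j$ were chosen to cover $B\setminus\mathring U$.

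Then I would build the chain by ``turning the half-spaces back on'' one at a time: set
$$K_{i+1}=\overline{\Big(B\setminus\big(H_{i+1}\cup H_{i+2}\cup\cdots\cup H_m\big)\Big)},\qquad i=0,1,\dots,m-1,$$
so that $K_{m+1}$ (with the empty union) equals $\overline{B}=B$. Each $K_{i+1}$ is compact convex and regular by the same argument as for $K_1$. It remains to see that $(K_i,K_{i+1})$ is a simple convex pair in the sense of Definition \ref{def:SPP}: by construction $K_i=K_{i+1}\cap\{\lambda_i\ge a_i\}=\{z\in K_{i+1}:(-\lambda_i)(z)\le -a_i\}$, which is exactly the required form with linear function $-\lambda_i$ and constant $-a_i$. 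This completes the construction.

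The main obstacle I anticipate is not the convexity or the separation — those are routine — but the regularity bookkeeping: one must ensure at every stage that the newly formed compact convex set is the closure of its interior. The subtle case is when a cutting hyperplane $\{\lambda_j=a_j\}$ is tangent to, or meets only the boundary of, the current body, producing a lower-dimensional sliver or even an empty interior. This can be prevented by choosing the constants $a_j$ generically (perturbing them slightly so that each hyperplane either misses $\mathring{B}$ entirely — in which case that $H_j$ is redundant and may be discarded — or slices through $\mathring{B}$ transversally, leaving full-dimensional pieces on both sides); since there are only finitely many $H_j$ and each perturbation can be taken as small as we like, we can keep $C\subset K_1\subset U$ throughout. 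With this genericity in place every intermediate body has nonempty interior and is regular, and the chain $C\subset K_1\subset\cdots\subset K_{m+1}=B$ has all the asserted properties.
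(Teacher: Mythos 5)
Your construction is essentially the paper's own proof: writing $K_i=B\cap\bigcap_{j\ge i}\{\lambda_j\ge a_j\}$ with finitely many half-spaces containing $C$ whose intersection with $B$ lies in $U$, and peeling them off one at a time, is exactly the argument in the paper (your covering of $B\setminus U$ by separating half-spaces is just the compactness proof of the standard fact the paper invokes). The regularity/genericity worry you flag is superfluous: since $C$ is regular and nonempty it has nonempty interior, each $K_i\supset C$ is then a compact convex set with nonempty interior and hence automatically regular, so no perturbation of the constants $a_j$ is needed (and since $B$ is regular, any open half-space meeting $B$ already meets $\mathring B$).
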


\begin{proof}
Given a linear function $\lambda\colon\R^N\to\R$ and a number $a\in\R$ we let 
\[
	H(\lambda,a)=\{x\in\R^N:\lambda(x)\le a\}.
\]
Since $C$ is compact and convex, it is the intersection of closed half-spaces.
Hence there exist finitely many linear functions $\lambda_1,\ldots,\lambda_m\colon\R^N\to\R$
and numbers $a_1,\ldots,a_m\in\R$ such that 
\[
	C  \,\,\subset\,\, \bigcap_{i=1}^m H(\lambda_i,a_i) \,\subset\, U. 
\]
The sets $K_i=\bigcap_{j=i}^m H(\lambda_j,a_j) \cap B$ for $i=1,\ldots,m$ and $K_{m+1}=B$ 
then satisfy the lemma. (If $K_i=K_{i+1}$ for some $i$ then $K_{i}$ may be removed from
the sequence.)
\end{proof}

%
%
A compact set $K$ in a complex manifold $S$  is said to be a {\em Stein compact} if $K$ admits a basis of 
open Stein neighborhoods in $S$. If $K\subset A$ are compacts in $S$, we say that $K$ is $\Ocal(A)$-convex if there 
is an open set $W\subset S$ containing $A$ such that $K$ is $\Ocal(W)$-convex. 
By $\Ocal(A)$ we denote the algebra of functions that are holomorphic in open neighborhoods
of $A$ (depending on the function).

The following notion of a {\em special Cartan pair} is a small variation of \cite[Def.\ 5.7.2,\ p.\ 234]{Forstneric2017E}. 
A slightly more restrictive notion was used in \cite{AndristFRW2016}  where the sets
$C\subset B$ were assumed to be smoothly bounded strongly convex,  while $A$ and 
$D=A\cup B$ were strongly pseudoconvex (a {\em strongly pseudoconvex Cartan pair}).
The main novelty is the notion of  a {\em very special Cartan pair} in which the attaching set 
of the convex bump is a thin convex slab; this plays an important role in the proof of
Lemma \ref{lem:main} in the following section.

%
%
\begin{definition}
\label{def:CP}
A pair of compact sets $(A,B)$ in a complex manifold $S$ is a {\em special Cartan pair} if 
\begin{itemize}
\item[\rm(i)]  the sets $A$, $B$, $D=A\cup B$ and $C=A\cap B$ are Stein compacts, 
\vspace{1mm}
\item[\rm(ii)]  $A$ and $B$ are {\em separated} in the sense that
$\overline{A\setminus B}\cap \overline{B\setminus A} =\varnothing$, and 
\vspace{1mm}
\item[\rm (iii)] there is a holomorphic coordinate system on a neighborhood of $B$ in $S$ 
in which $B$ and $C=A\cap B$ are regular convex sets.
\end{itemize}
A special Cartan pair $(A,B)$ with $C=A\cap B$ is {\em very special} if 
\begin{itemize}
\item[\rm (iv)] there is a holomorphic coordinate system on a neighborhood of $B$ in $S$ 
in which $(C,B)$ is simple convex pair (see Definition \ref{def:SPP}).
\end{itemize}
\end{definition}

If $K$ is a compact convex set in $\R^N$, then a {\em slice} of $K$ is the intersection of $K$ with a 
real affine hyperplane, and a {\em slab} of $K$ is a subset of the form
\[
	K_{a,b} = \{x\in K : a\le \lambda(x) \le b\}
\]
where $a<b$ are real numbers and $\lambda\colon \R^N\to \R$ is a  linear function. 
The number $b-a$ is called the {\em thickness} of the slab $K_{a,b}$. If $K$ is a compact subset of a manifold $S$
that is contained in a local chart and is convex in that chart, then a slice or a slab of $K$ 
will be understood as a subset of the respective type in the given chart. 

%
%
\begin{lemma}\label{lem:simple}
Assume that $(A,B)$ is a special Cartan pair in a complex manifold $S$. 
Given an open set $W\subset S$ containing $A$, there is a finite sequence of compact sets 
\[
	A\subset A_1\subset A_2\subset \cdots A_{m+1}= A\cup B
\]
such that $A_1\subset W$, for every $i=1,\ldots,m$ we have
$A_{i+1}=A_i\cup B_i$ where $(A_i,B_i)$ is a very special Cartan pair,
and $C_i=A_i\cap B_i$ is an arbitrarily thin slab of $B_i$. 
If the set $B$ is $\Ocal(D)$-convex where $D=A\cup B$, 
then the pairs $(A_i,B_i)$ can be chosen such that $B_i$ is $\Ocal(A_{i+1})$-convex for
every $i=1,\ldots,m$.
\end{lemma}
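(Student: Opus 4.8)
The plan is to perform the entire construction inside the holomorphic coordinate chart provided by Definition \ref{def:CP}(iii), where the task reduces to the convex‑geometric subdivision of Lemma \ref{lem:SCPs}, and then to promote the resulting chain of simple convex pairs to a chain of \emph{very special} Cartan pairs by attaching slightly oversized convex caps rather than exact set differences, so that the attaching regions become slabs of positive but arbitrarily small thickness. Concretely, fix a chart identifying a neighborhood $\Omega_B$ of $B$ with an open subset of $\C^n\cong\R^{2n}$, $n=\dim S$, in which $B$ and $C=A\cap B$ are regular compact convex sets. Since $C\subset A\cap\Omega_B\subset W\cap\Omega_B$, choose an open set $U$ with $C\subset U\subset W\cap\Omega_B$ and apply Lemma \ref{lem:SCPs} to $C\subset B$ and $U$, obtaining regular compact convex sets $C\subset K_1\subset\cdots\subset K_{m+1}=B$ with $K_1\subset U$ and $K_i=\{z\in K_{i+1}:\lambda_i(z)\le a_i\}$ for $\R$-linear $\lambda_i\neq 0$ and $a_i\in\R$; after deleting repetitions we may assume $K_i\subsetneq K_{i+1}$. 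For sufficiently small $\epsilon_i>0$ set
\[
	B_i=\{z\in K_{i+1}:\lambda_i(z)\ge a_i-\epsilon_i\}\quad(1\le i\le m),\qquad A_i=A\cup K_i\quad(1\le i\le m+1),
\]
so $A_{m+1}=A\cup B$.

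These sets satisfy the required properties by elementary computations in the chart: $A\subset A_1\subset A\cup U\subset W$; $A_i\cup B_i=A\cup K_{i+1}=A_{i+1}$; and, because $B_i\subset B$ forces $A\cap B_i\subset A\cap B=C\subset K_1\subset K_i$,
\[
	C_i:=A_i\cap B_i=K_i\cap B_i=\{z\in K_{i+1}:a_i-\epsilon_i\le\lambda_i(z)\le a_i\}
\]
is a slab of $B_i$ of thickness $\epsilon_i$; moreover $C_i=\{z\in B_i:\lambda_i(z)\le a_i\}$, so $(C_i,B_i)$ is a simple convex pair, and $B_i,C_i$ are convex with nonempty interior in the chart (for $\epsilon_i$ small), hence regular; thus (iii) and (iv) of Definition \ref{def:CP} hold. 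Separation (ii) is a routine check: $\overline{B_i\setminus A_i}\subset\{\lambda_i\ge a_i\}\cap\overline{B\setminus A}$ while $\overline{A_i\setminus B_i}\subset\{\lambda_i\le a_i-\epsilon_i\}\cup\overline{A\setminus B}$, and these are disjoint since $a_i-\epsilon_i<a_i$ and $\overline{A\setminus B}\cap\overline{B\setminus A}=\varnothing$. For (i), the sets $B_i$ and $C_i$ are compact convex in a chart, hence polynomially convex there and Stein compacts; and $A_i=A\cup K_i$ is a Stein compact by induction on $i$, since $A_1$ arises from the Stein compact $A$ by attaching the convex bump $K_1$ (note $A\cap K_1=C$) and $A_{i+1}=A_i\cup B_i$ arises from the Stein compact $A_i$ by attaching the convex bump $B_i$, and attaching a convex bump to a Stein compact again yields a Stein compact (see \cite{Forstneric2017E}). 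This establishes the main assertion.

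For the addendum, assume $B$ is $\Ocal(D)$-convex, hence $\Ocal(\Omega)$-convex for some open $\Omega\supset D$, which we may shrink to a Stein neighborhood of $D$ without losing $\Ocal(\Omega)$-convexity of $B$ (separating functions restrict). Telescoping the simple convex pairs gives $K_{i+1}=B\cap\bigcap_{j=i+1}^m\{\lambda_j\le a_j\}$, hence $B_i=B\cap P_i$ for a convex polyhedron $P_i$ in the chart. For $p\in\Omega\setminus B_i$: if $p\notin B$, the $\Ocal(\Omega)$-convexity of $B$ yields $g\in\Ocal(\Omega)$ with $|g(p)|>\sup_B|g|\ge\sup_{B_i}|g|$; if $p\in B\setminus P_i$, some defining inequality of $P_i$ fails at $p$, say $\mathrm{Re}\,\ell(p)>c$ where $\{\mathrm{Re}\,\ell\le c\}\supset P_i$ and $\ell$ is $\C$-linear on the chart, so $g_0=\exp(\ell-c)\in\Ocal(\Omega_B)$ has $|g_0(p)|>1\ge\sup_{B_i}|g_0|$, and approximating $g_0$ uniformly on $B$ by $g\in\Ocal(\Omega)$ (Oka--Weil, valid since $\Omega$ is Stein and $B$ is $\Ocal(\Omega)$-convex) gives $|g(p)|>\sup_{B_i}|g|$. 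Thus $B_i$ is $\Ocal(\Omega)$-convex, and since $\Omega\supset D\supset A_{i+1}$, it is $\Ocal(A_{i+1})$-convex.

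The main obstacle is not any isolated step but getting the geometry of the bumps exactly right so that all four conditions hold simultaneously with the attaching region a thin slab: the naive choice $B_i=\overline{K_{i+1}\setminus K_i}$ violates separation and degenerates the ``slab'' to a slice of zero thickness, which is precisely why one must use the oversized caps $\{\lambda_i\ge a_i-\epsilon_i\}$ — the feature that Lemma \ref{lem:main} will exploit — and then re-verify separation and the Stein-compactness of every intermediate set $A\cup K_i$. The other delicate point is the addendum, where the elementary separating functions $\exp(\ell-c)$ are defined only on the chart and must be transplanted, via Oka--Weil approximation on $B$, to holomorphic functions on a neighborhood of all of $A_{i+1}$.
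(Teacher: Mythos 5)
Your construction is, in substance, the paper's own: you run Lemma \ref{lem:SCPs} in the chart from Definition \ref{def:CP}(iii) to get the chain $K_1\subset\cdots\subset K_{m+1}=B$, set $A_i=A\cup K_i$, and take for $B_i$ a slightly thickened cap so that $C_i$ is a slab of thickness $\epsilon_i$. Your verifications of the identities $A_{i+1}=A_i\cup B_i$ and $C_i=K_i\cap B_i$, of separation, of conditions (iii)--(iv), and your detailed argument for the addendum are fine; the only real deviation is that you allow $A\cap B_i\subset C$ to be nonempty, whereas the paper inserts an auxiliary convex set $\wt C\supset C$ with $B\cap\wt C\subset K_1$ and takes the cut levels close together precisely so that $A\cap B_i=\varnothing$. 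That deviation is harmless for the points just listed, but it matters for the one step where your proof has a genuine gap: condition (i), the Stein compactness of the intermediate sets $A_1=A\cup K_1$ and $A_{i+1}=A_i\cup B_i$.

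The principle you invoke there --- ``attaching a convex bump to a Stein compact again yields a Stein compact (see \cite{Forstneric2017E})'' --- is not a theorem in that generality, and it is false without further hypotheses: in $\C^2$ the set $A=\{|z|=1,\,|w|\le 1\}$ is a Stein compact and $K=\{|z|\le 1,\, w=0\}$ is compact and convex, yet $A\cup K$ has no Stein neighborhood basis, since every neighborhood of it contains a Hartogs figure whose envelope is a fixed bidisc. (In that example separation fails; but even for the separated attachments occurring in your chain, with the overlap $C$ an arbitrary convex set rather than a half-space cut, no lemma of the cited reference gives the conclusion --- indeed Definition \ref{def:CP}(i) \emph{assumes} that $A\cup B$ is a Stein compact exactly because this is not automatic.) Note also that your argument never uses the hypothesis that $D=A\cup B$ is a Stein compact, which is precisely what the paper's proof uses: it establishes the Stein compactness of the $A_i$ by a \emph{downward} induction starting from $A_{m+1}=D$, at each step removing only a cap cut off by a level set of the pluriharmonic function $\lambda_i\circ\theta$, the cap being kept at positive distance from $A$ (this is the purpose of $\wt C$ and of the condition $A\cap B_i=\varnothing$), so that a Stein neighborhood of $A_{i+1}$ can be trimmed to one of $A_i$. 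Your sets do satisfy $A\cap K_{i+1}\cap\{\lambda_i>a_i\}=\varnothing$, so the same downward argument repairs your proof; but as written, the upward induction resting on the quoted general principle does not establish condition (i), and this is the one non-formal verification in the lemma.
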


\begin{proof}
Let $C=A\cap B$. By the assumption, there are an open neighborhood $V_0 \subset S$ of $B$ and a biholomorphic
map $\theta\colon V_0 \to \wt V_0 \subset \C^d$ onto an open convex subset  of $\C^d$
such that $\theta(C)\subset \theta(B)$ are regular compact convex set in $\C^d$. We use the chart $\theta$ to define
the notion of convexity, slices, slabs, and simple convex pairs in $V_0$. 

Pick an open neighborhood $U$ of $C$, with $U\subset W$, and choose a compact convex set
$\wt C\subset U$ which contains $C$ in its interior.

Lemma \ref{lem:SCPs} furnishes a sequence $K_1\subset K_{2} \subset\cdots\subset K_{m+1}=B$ 
of regular compact convex sets (with respect to the chart $\theta\colon V_0\to \wt V_0$) such that 
\begin{equation}\label{eq:BC} 
	B\cap \wt C \subset K_1\subset U
\end{equation}
and $(K_{i},K_{i+1})$ is a simple convex pair for every $i=1,\ldots,m$
(see Def.\ \ref{def:SPP}). This means that for every $i$ there are an $\R$-linear
function $\lambda_i\colon\C^d\to\R$ and a number $b_i\in\R$ such that
\begin{equation}\label{eq:Ki}
	K_i=\{x\in K_{i+1} : \lambda_i(\theta(x)) \le b_i\}.
\end{equation}
Choose a number $a_i\in \R$ with $a_i<b_i$ and close to $b_i$. For every $i=1,\dots, m$ let
\begin{equation}\label{eq:AiBi}
	A_i=A\cup K_i,\quad  B_i=\{x\in K_{i+1} : a_i \le \lambda_i(\theta(x))\}.
\end{equation}
Assuming that $a_i$ is chosen sufficiently close to $b_i$ for each $i$, 
condition \eqref{eq:BC} implies that 
\[
	A\cap B_i=\varnothing\quad \text{for}\ \ i=1,\ldots,m.
\]
Then $A_i\cup B_i=A\cup K_{i+1}=A_{i+1}\subset A\cup B$ and
\begin{equation}\label{eq:Ci}
	C_i=A_i\cap B_i=\{x\in K_{i+1} : a_i \le \lambda_i(\theta(x)) \le b_i\}.
\end{equation}
Thus, $C_i$ is a slab of the compact convex set $K_{i+1}$.
Note also that $D=A\cup B = A_{m+1}$ which is a Stein compact.
It is easily verified by a downward induction on $i$ that $A_i$ is a Stein compact and 
$(A_i,B_i)$ is a very special Cartan pair for every $i=1,\ldots,m$.
Note that every $B_i$ is a convex subset of $B$ (in the $\theta$-coordinates). 
If $B$ is $\Ocal(D)$ convex, then it clearly follows that $B_i$
is $\Ocal(A_{i+1})$-convex for every $i=1,\ldots,m$.
\end{proof}

\begin{remark}\label{rem:choices}
We have a lot of freedom in the choices of the slabs $C_i$ \eqref{eq:Ci}. In particular, we can replace $a_i$ and $b_i$ 
by any pair of numbers $a'_i,b'_i$ with $a_i<a'_i<b'_i<b_i$ and redefine the sets $K_i$ \eqref{eq:Ki},
$A_i,B_i$ \eqref{eq:AiBi} and $C_i$ \eqref{eq:Ci} accordingly. 
In particular, the attaching slabs $C_i$ of the convex bumps $B_i$ can be chosen 
arbitrarily thin, a fact that will be important in the proof of Lemma \ref{lem:main}
in the following section.
\end{remark}

%
%

\section{The main lemma}
\label{sec:mainlemma}
In this section we prove the following main lemma which implies Theorem \ref{th:main} 
in exactly the same way as \cite[Lemma 2.2]{AndristFRW2016} implies \cite[Theorem 1.1]{AndristFRW2016}.

\begin{lemma}
\label{lem:main}
Assume that $S$ is a complex manifold of dimension $d$, and $X$ is a Stein manifold of dimension $2d$
with the density property or the volume density property. Let $(A,B)$ be a special Cartan pair in $S$
(see Def.\ \ref{def:CP}). Set $C=A\cap B$ and $D=A\cup B$. Assume that 
\begin{itemize}
\item[\rm (a)] $L$ is a compact $\Ocal(X)$-convex set in $X$, 
\item[\rm (b)] $K$ is a compact set contained in $\mathring A\setminus C$ such that $K\cup B$ is $\Ocal(D)$-convex,  
\item[\rm (c)] $W\subset S$ is an open set containing $A$, and 
\item[\rm (d)] $f\colon W\to X$ is a holomorphic map such that $f^{-1}(L)\subset \mathring K$; equivalently,
\begin{equation}\label{eq:collar1}
	f(W\setminus \mathring K) \subset X\setminus L.
\end{equation}
\end{itemize}
Then it is possible to approximate $f$  as closely as desired, uniformly on $A$, by a holomorphic immersion 
$\tilde f\colon \wt W\to X$ on a neighborhood $\wt W$ of $D=A\cup B$ such that  
\begin{equation}\label{eq:collar2}
	\tilde f(\wt W\setminus \mathring K) \subset X\setminus L.
\end{equation}
\end{lemma}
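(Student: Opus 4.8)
The strategy is the standard one-step Cartan-type gluing, but with the key refinement that the attaching set $C = A \cap B$ can be taken to be an arbitrarily thin convex slab of $B$ (condition (iv) of the very special Cartan pair). First I would reduce to the very special case: by Lemma \ref{lem:simple}, the given special Cartan pair $(A,B)$ decomposes into a finite chain $A \subset A_1 \subset \cdots \subset A_{m+1} = A \cup B$, where each $(A_i, B_i)$ is a very special Cartan pair with $C_i = A_i \cap B_i$ an arbitrarily thin slab of $B_i$, and—using hypothesis (b) which gives that $K \cup B$, hence a fortiori $B$, is $\Ocal(D)$-convex—with $B_i$ being $\Ocal(A_{i+1})$-convex. (One must also keep $K \subset \mathring{A} \setminus C$ disjoint from all the $B_i$, which is automatic since the $B_i$ lie in a thin neighborhood of $B$ while $K$ is compactly contained in $A \setminus C$.) A finite induction over $i$ then reduces the whole lemma to the single case where $(A,B)$ is very special and $C = A \cap B$ is a thin convex slab of $B$; the property \eqref{eq:collar1}$\Rightarrow$\eqref{eq:collar2} is preserved through the chain because each new map differs from the old only near $B_i$, which is disjoint from $K$, so the image of $\wt W \setminus \mathring K$ stays outside $L$ (possibly after shrinking $L$-neighborhoods slightly, or rather keeping the approximation tight enough).

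\textbf{The gluing step.} So assume $(A,B)$ is very special with $C$ a thin slab of $B$ inside a holomorphic coordinate chart $\theta$ near $B$. The map $f$ is holomorphic on $W \supset A$; in particular it is holomorphic on a neighborhood of $C$. The crucial point is that, since $C$ is a thin convex slab, after a small generic perturbation of $f$ on a neighborhood of $C$ (keeping \eqref{eq:collar1} and the approximation on $A$), we may assume $f$ is an \emph{immersion} on a neighborhood of $C$ and moreover \emph{injective} there: indeed on a thin slab the map $z \mapsto (f(z), \text{second-order data})$ is close to affine, so a tiny perturbation makes $df$ injective (possible since $\dim X = 2d > d$) and makes $f|_C$ injective (again using $2d > d$ and thinness of the slab, which forces the image of $C$ to lie near an affine $d$-plane where self-intersections are removed by a generic linear shear). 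This is exactly where the hypothesis $\dim X = 2 \dim S$, together with thinness, is used in an essential way, and it is the point where the argument genuinely differs from \cite[Lemma 2.2]{AndristFRW2016}.

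\textbf{Applying the Andersén–Lempert theory and concluding.} With $f|_C$ an embedding and $B$ a convex set in the chart, the Anders\'en–Lempert theory for the target $X$ (which enjoys the density or volume density property) applies: the approximation theorem for isotopies of biholomorphisms between Runge domains—see \cite[Theorem 4.10.5]{Forstneric2017E}—lets us approximate, uniformly on $C$, the given immersion $f|_C \colon C \to X$ by an immersion $g$ defined on all of (a neighborhood of) $B$, obtained by first isotoping the embedded $f(C)$ inside $X$ to a position admitting a $2d$-dimensional extension, then pulling back by automorphisms of $X$; the $\Ocal(A_{i+1})$-convexity of $B$ (in the chain) and $\Ocal(X)$-convexity of $L$ guarantee the relevant Runge/convexity hypotheses and let us keep $g(B \setminus \mathring{K}) \cap L = \emptyset$ since $B$ is disjoint from $K$ and $L$ is avoided by $f$ near $C$. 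Finally, since $f$ and $g$ are two holomorphic immersions that are uniformly close on a neighborhood of the thin slab $C$, and $(A,B)$ is a Cartan pair, the standard Cartan gluing lemma with a holomorphic transition map close to the identity—\cite[proof of Lemma 2.2]{AndristFRW2016}, which patches $f$ on $A$ with $g$ on $B$ via a splitting $\gamma = \alpha \cdot \beta^{-1}$ of the transition cocycle—produces a single holomorphic map $\tilde f$ on a neighborhood $\wt W$ of $D = A \cup B$, close to $f$ on $A$ and close to $g$ on $B$, hence an immersion (openness of the immersion condition) and still satisfying \eqref{eq:collar2} because it agrees with $f$ up to small error on $A \supset K$ and its image on $\wt W \setminus \mathring K$ is a small perturbation of $f(A \setminus \mathring K) \cup g(B)$, which misses $L$. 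The main obstacle is the middle step: arranging that $f$ can be perturbed to be an \emph{embedding} on the attaching set, which is precisely why one needs the attaching set to be a thin convex slab rather than merely a convex set, and why Lemma \ref{lem:simple} and the notion of a very special Cartan pair were set up.
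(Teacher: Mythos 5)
Your overall architecture matches the paper (reduce to very special Cartan pairs via Lemma \ref{lem:simple}, arrange that $f$ embeds a neighborhood of the thin attaching slab $C$, then use Anders\'en--Lempert theory plus the standard gluing), but there is a genuine gap at the central step, and it is precisely the step where the hypothesis $\dim X=2\dim S$ bites. You claim that, because $C$ is a thin slab, a small generic perturbation of $f$ near $C$ makes $f|_C$ injective. This is false in general: double points of a holomorphic immersion of a $d$-dimensional manifold into a $2d$-dimensional manifold are isolated and \emph{stable} under small perturbations, so if $f$ has a double point pair with both preimages in $C$, no small perturbation (holomorphic or not) removes it. Nor does thinness help in the way you suggest: the slab is thin in only one real direction, so $f|_C$ is not close to an affine map and its self-intersections cannot be killed by a generic shear. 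The paper's argument has the quantifiers the other way around: first replace $f$ by a \emph{proper} holomorphic immersion with simple double points of a strongly pseudoconvex neighborhood $W_0\supset A$ into $X$, using \cite[Theorem 1.1]{BDF2010} together with a general position argument, while preserving $f^{-1}(L)\subset\mathring K$; since the double points in the relevant compact set are then finite in number, most slices of $B$ miss them, and one \emph{chooses} the thin slab $C$ (Remark \ref{rem:choices}) around such a slice, after which $f$ is automatically an embedding near $C$. Your proposal never performs this global replacement of $f$, which also creates a secondary problem at the end: the original $f$ need not be an immersion on $A$ at all, so "openness of the immersion condition" does not make $\tilde f$ an immersion there; the paper instead makes $\tilde f$ an immersion with simple double points by a final small perturbation, again using $n=2d$.

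A second, related omission: you assert that the Runge/convexity hypotheses for the Anders\'en--Lempert step "are guaranteed" by $\Ocal(X)$-convexity of $L$ and convexity of $B$, but in the paper these hypotheses are earned. Properness of the corrected immersion makes $\Sigma=f(W_0)$ a closed immersed subvariety with simple normal crossings, and Lemma \ref{lem:holoconvex} then gives $\Ocal(X)$-convexity of $f(A)$, $f(C)$, $L'=L\cup f(K)$ and $L'\cup f(C)$. This is what allows one to build the Runge domain $\Omega_0=\Omega\cup\Omega'$, with $\Omega=F(U_1\times\D^{n-d})$ a tube around $f(C)$ (normal bundle trivialized over a convex neighborhood by Oka--Grauert) and $\Omega'$ a Stein neighborhood of $L'$ on which the isotopy $\psi_t$ is the identity; only then does \cite[Theorem 4.10.5]{Forstneric2017E} yield an automorphism $\Psi$ close to the identity near $L'$, so that $G=\Psi^{-1}\circ F\circ\phi_1$ avoids $L'$ and the glued map satisfies \eqref{eq:collar2}. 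Without the properness step and Lemma \ref{lem:holoconvex}, your appeal to the density property has no Runge domain to act on, and the avoidance of $L$ by the extension over $B$ is not justified. So the proposal needs to be repaired by inserting the BDF2010 properization and the holomorphic convexity bookkeeping, and by replacing the "perturb $f$ to be injective on $C$" step with the "choose the slab to avoid the finitely many double points" step.
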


\begin{proof}
By the definition of a very special Cartan pair (see Def.\ \ref{def:CP}) there are an open neighborhood $V_0$ of $B$ 
and a biholomorphic map $\theta\colon V_0 \to \wt V_0 \subset \C^d$ 
onto an open convex subset  of $\C^d$ such that $\theta(C)\subset \theta(B)$ are regular compact convex set in $\C^d$. 
In the sequel, when speaking of convex subsets of $V_0$, we mean sets whose $\theta$-images in $\C^d$ are convex.

Replacing $S$ by a Stein neighborhood of the compact strongly pseudoconvex 
domain $D=A\cup B$, we may assume that $D$ is $\Ocal(S)$-convex. Hence, 
any subset of $D$ which is $\Ocal(D)$-convex is also $\Ocal(S)$-convex.
In particular, this holds for the sets $B$ and $K\cup B$ by the assumption (b).
The same is true for the set $C$ which is convex in $B$.
Furthermore, we claim that $A$ is $\Ocal(S)$-convex. Indeed, given a point $p\in D\setminus A = B\setminus A$,
there is a function $g\in\Ocal(B)$ such that $g(p)=1$ and $|g|<1/2$ on $C$. Since
$B$ is $\Ocal(S)$-convex, we can approximated $g$ uniformly on $B$ by a function
$h\in\Ocal(S)$ satisfying the same conditions. In particular, $|h|<1/2$ on $bA\cap B$
which is the relative boundary of the set $B\setminus A$ in $D$. Since $B\setminus A$ is
a relative neighborhood of $p$ in $D$, Rossi's local maximum modulus principle
implies that $p$ does not belong to the $\Ocal(D)$-convex hull of $A$ and the claim follows.

By Lemma \ref{lem:simple} it suffices to consider the case when $(A,B)$ is a very special Cartan pair.
Indeed, the cited lemma allows us to replace a special Cartan pair by a finite sequence of very
special Cartan pairs, so we obtain a map $\tilde f$ satisfying the conclusion of Lemma \ref{lem:main}
by a finite number of applications of the same lemma for a very special Cartan pair.

Hence we shall assume from now on that $(A,B)$ is a very special Cartan pair.

Let $W\subset S$ be a neighborhood of $A$ as in conditions (c) and (d).
Pick a smoothly bounded strongly pseudoconvex Runge domain $W_0\Subset W$ such that $A\subset W_0$.  
We claim that $f$ can approximated as closely as desired uniformly on $A$ 
by a proper holomorphic immersion $g\colon W_0\hra X$ such that $g^{-1}(L) \subset \mathring K$. 
To see this, pick a strongly plurisubharmonic exhaustion function $\sigma\colon X\to \R$ such that 
$\{\sigma<0\}$ on $L$  and $\sigma >0$ on $f(\overline {W_0 \setminus K})$;
such $\sigma$ exists because $L$ is $\Ocal(X)$-convex and $f(\overline {W_0 \setminus K}) \cap L=\varnothing$.
Given $\epsilon>0$, we can apply \cite[Theorem 1.1]{BDF2010} in order to approximate 
$f$ uniformly on $A$ by a proper holomorphic map $g\colon W_0\to X$ satisfying $\sigma(g(z)) > \sigma(f(z))-\epsilon$ 
for all $z\in W_0$. Choosing $\epsilon>0$ small enough ensures that $g^{-1}(L) \subset \mathring K$;
equivalently, $g(W_0\setminus K)\subset X\setminus L$.
Since $n=2d$, the general position argument shows that $g$ can be chosen an immersion with simple double
points. Replacing $f$ by $g$, we may assume that $f$ satisfies these conditions. 

The image $\Sigma= f(W_0) \subset X$ is a closed immersed complex submanifold of $X$ with 
simple double points. By Lemma \ref{lem:holoconvex} it follows that a compact subset 
$M \subset W_0$ is $\Ocal(W_0)$-convex if and only if its image $f(M)\subset X$ is $\Ocal(X)$-convex. 
By what has been said above, the sets $f(A)$, $f(C)$, and $f(K\cup C)$ are $\Ocal(X)$-convex. 
By  \eqref{eq:collar1})  we also have that $L\cap \Sigma \subset f(K)$, and hence the sets $L'=L\cup f(K)$ and 
$L'\cup f(C)$ are $\Ocal(X)$-convex in view of  \cite[Lemma 6.5]{Forstneric1999}.

At this point we arrive to the main difference with respect to \cite[proof of Lemma 2.2]{AndristFRW2016}.
In that lemma, the map $f\colon W_0\to X$ can be chosen an embedding since $n>2d$. In the present case,
with $n=2d$, it is an immersion with simple double points. However, the attaching set $C=A\cap B$ of the bump $B$ 
can be chosen a thin slab of the convex set $B$ (see Lemma \ref{lem:simple} and Remark \ref{rem:choices}).
A suitable choice of $C$ ensures that $f$ is an embedding on a neighborhood of $C$. 
Indeed, most slices of $B$ (which are convex sets of real dimension $2d-1$) do not contain any of
the finitely many double points of $f$; it then suffices to let $C$ be a sufficiently thin slab 
around such a slice and to adjust the sets $A$ and $B$ accordingly (see Remark \ref{rem:choices}).

We assume from now on that $f(C)$ is embedded in $X$.
Pick a compact set $P\subset X\setminus L'$ containing $f(C)$ 
in its interior such that $L' \cup P$ is also $\Ocal(X)$-convex. 

Choose small open convex neighborhoods $U\subset V$ of the sets $C$ and $B$, respectively, such that  
$U\Subset V\cap W_0$ and $V\Subset V_0$. (The notation $V\Subset V_0$ means that the closure of
$V$ is compact and contained in $V_0$.) We choose $U$ small enough such that $f|_U$ is an embedding
(recall that $f|_C$ is an embedding). The normal bundle of the immersion $f\colon W_0\to X$ 
is holomorphically trivial over the convex set $U$ by the Oka-Grauert principle 
(see \cite[Theorem 5.3.1, p.\ 213]{Forstneric2017E}). Let $\D^{n-d}$ denote the unit polydisc in $\C^{n-d}$. 
It follows that there are a neighborhood $W_1 \subset W_0$ of $A$, a convex neighborhood $U_1\subset U$ of 
$C$, and a holomorphic map $F\colon W_1 \times \D^{n-d} \to X$ such that $F$ is injective 
on $U_1\times \D^{n-d}$ (hence biholomorphic onto the open subset 
$F(U_1\times \D^{n-d})$ of $X$) and $F(z,0)=f(z)$ holds for all $z\in W_1$.  
By a further shrinking of the neighborhood $U_1\supset C$ 
and rescaling of the variable $w\in \D^{n-d}$ we may also assume that the Stein domain 
\begin{equation}\label{eq:Omega}
	\Omega:=F(U_1 \times \D^{n-d})\subset P\subset X\setminus L'
\end{equation}
is Runge in $\mathring P$ and its closure $\overline \Omega$ is $\Ocal(P)$-convex. 
Since $L'\cup P$ is $\Ocal(X)$-convex, it follows that $L' \cup \overline{\Omega}$ is also $\Ocal(X)$-convex. 
Hence there is a Stein neighborhood $\Omega'\subset X$ of $L'$ such that 
$\overline \Omega \cap \overline \Omega'=\varnothing$ and the union 
$\Omega_0:=\Omega\cup\Omega'$ is a Stein Runge domain in $X$. 

Since the sets $U_1\subset V$ are convex, we can find an isotopy $r_t\colon V\to V$ of injective holomorphic 
self-maps, depending smoothly on the parameter $t\in [0,1]$, such that 
\begin{enumerate}
\item $r_0$ is the identity map on $V$, 
\vspace{1mm}
\item $r_t(U_1)\subset U_1$ for all $t\in [0,1]$, and 
\vspace{1mm}
\item $r_1(V)\subset U_1$. 
\end{enumerate}
In fact, in the coordinates on $V_0$ provided by the coordinate map $\theta\colon V_0\to\wt V_0\subset \C^d$ 
we can choose $r_t$ to be a family of linear contractions towards a point in $U_1$.

Consider the  isotopy of biholomorphic maps 
$
	\phi_t\colon V\times \D^{n-d}\to V\times \D^{n-d}
$
defined by 
\begin{equation}
\label{eq:phit}
	\phi_t(z,w)= (r_t(z),w),\quad z\in V,\ w\in \D^{n-d},\ t\in [0,1].  
\end{equation}
Since $r_1(V)\subset U_1$ by the condition (3) above, we have that
\begin{equation}\label{eq:phi1}
	\phi_1(V\times \D^{n-d}) \subset U_1\times \D^{n-d}.
\end{equation}
Recall that $\Omega$ is given by \eqref{eq:Omega} and $\Omega_0=\Omega\cup\Omega'$.
We define a smooth isotopy of injective holomorphic maps $\psi_t\colon \Omega_0 \to X$ ($t\in [0,1]$) 
by
\begin{equation} \label{eq:conj}
		\psi_t = F\circ \phi_t \circ F^{-1}\quad \text{on}\ \ \Omega; \qquad 
		\psi_t=\mathrm{Id}\quad \text{on}\ \ \Omega'.
\end{equation} 
The map $\psi_t$ is defined on $\Omega$ since $r_t(U_1)\subset U_1$ 
(and hence $\phi_t(U_1\times \D^{n-d})\subset U_1\times \D^{n-d}$). 
Note that $\psi_0$ is the identity on $\Omega_0$ and the domain $\psi_t(\Omega_0)$ is Runge in $X$ for all $t\in [0,1]$. 

Assuming that $X$ enjoys the density property, the Anders\'en-Lempert-Forstneri\v c-Rosay theorem 
\cite[Theorem 4.10.5, p.\ 143]{Forstneric2017E} allows us to approximate the map $\psi_1\colon \Omega_0 \to X$
uniformly on compacts  in $\Omega_0$ by holomorphic automorphisms $\Psi\in \Aut(X)$.  
Consider the injective holomorphic map 
\begin{equation}\label{eq:G}
	G = \Psi^{-1}\circ F\circ \phi_1 \colon V\times \D^{n-d}\to X.
\end{equation}
Note that $G$ is well-defined and injective on $V\times \D^{n-d}$ in view of \eqref{eq:phi1}
and because $F$ is injective on $U_1\times \D^{n-d}$. By \eqref{eq:phi1} and \eqref{eq:Omega} we have that
\[
	(F\circ \phi_1)(V\times \D^{n-d}) \cap L' \subset F(U_1\times \D^{n-d})\cap L' = \Omega\cap L'=\varnothing.
\]
Since $\psi_1$ equals the identity map on $\Omega' \supset L'$ by  (\ref{eq:conj}), 
$\Psi$ can be chosen  to approximate the identity as closely as desired 
on a neighborhood of $L'$, so we may assume that 
\[
	G(V\times \D^{n-d}) \subset X\setminus L'. 
\]
From  \eqref{eq:G} and the first equation in (\ref{eq:conj}) we see that
\[
	G=\Psi^{-1}\circ F\circ\phi_1 = \Psi^{-1}\circ \psi_1 \circ F \quad\text{on} \ \ U_1\times \D^{n-d}. 
\]
Since $\Psi^{-1} \circ \psi_1$ is close to the identity map on $\Omega = F(U_1\times \D^{n-d})$, 
$G$ is arbitrarily close to $F$ uniformly on compacts in $U_1\times \D^{n-d}$. 
Hence we can apply \cite[Theorem 4.1]{Forstneric2003Acta} (see also \cite[Theorem 9.7.1, p.\ 432]{Forstneric2017E})
to glue $F$ and $G$ into a holomorphic map $\wt F\colon (A\cup B)  \times \frac{1}{2}\D^{n-d}\to X$ 
such that $\wt F$ is close to $F$ on $A\times \frac{1}{2}\D^{n-d}$ and to $G$ on $B\times \frac{1}{2} \D^{n-d}$. 
The holomorphic map $\tilde f:= \wt F(\cdotp,0) \colon A\cup B =D\to X$ then satisfies the conclusion of
Lemma \ref{lem:main}, except that it need not be an immersion with simple double points; 
this can be achieved by a small perturbation since $2d=n$. 
If the approximations are close enough, then $\tilde f(B)\cap L=\varnothing$ and
$\tilde f^{-1}(L)\subset \mathring K$, so condition \eqref{eq:collar2} holds.

This proves Lemma \ref{lem:main} in the case when $X$ enjoys the density property.
A similar argument applies if $X$ enjoys the volume density property 
with respect to a holomorphic volume form; the details are the same as in  
\cite[proof of Lemma 2.2]{AndristFRW2016}.
\end{proof}

%
%
\begin{proof}[Proof of Theorem \ref{th:main}]
Theorem \ref{th:main} follows from Lemma \ref{lem:main} in exactly the same way as \cite[Theorem 1.1]{AndristFRW2016}
follows from \cite[Lemma 2.2]{AndristFRW2016}; the proof goes as follows. One distinguishes the 
noncritical case and the critical case. In the noncritical case we have a pair of compact strongly pseudoconvex
domains $A\subset A'$ in $S$, given by two sublevel sets of a strongly
plurisubharmonic function without critical points on $\overline{A\setminus A'}$.
We are also given a holomorphic immersion $f\colon A\to X$ on a neighborhood of $A$,
a compact $\Ocal(A)$-convex set $K\subset \mathring A$, and a compact $\Ocal(X)$-convex
set $L\subset X$ such that $f(A\setminus \mathring K)\subset X\setminus L$ (see \eqref{eq:collar1}). 
We wish to approximate $f$ as closely as desired uniformly on $A$ by a holomorphic immersion $\tilde f\colon A'\to X$
satisfying $\tilde f(A'\setminus \mathring K)\subset X\setminus L$ (see \eqref{eq:collar2}). 
As explained in \cite[Proposition 2.3]{AndristFRW2016}, we can obtain $A'$ from $A$ by 
attaching finitely many special convex bumps of the type used in Lemma \ref{lem:main}
so that we have a special Cartan pair at every stage. (Note that
strongly pseudoconvex Cartan pairs, used in \cite{AndristFRW2016}, are also
special Cartan pairs in the sense of Definition \ref{def:CP}.)
Hence, an immersion $\tilde f\colon A'\to X$ with the stated properties is obtained 
by successively applying Lemma \ref{lem:main} finitely many times. The {\em critical case}, which 
amounts to the change of topology at a critical point of a strongly plurisubharmonic exhaustion 
function on $X$, is handled in exactly the same way as in \cite{AndristFRW2016}.
\end{proof}


\subsection*{Acknowledgements}
F.\ Forstneri\v c is partially  supported  by the research program P1-0291 and the research grant 
J1-7256 from ARRS, Republic of Slovenia.


{\bibliographystyle{abbrv} \bibliography{bibliography}}


\vspace*{0.5cm}

\noindent Franc Forstneri\v c

\noindent Faculty of Mathematics and Physics, University of Ljubljana, Jadranska 19, SI--1000 Ljubljana, Slovenia.

\noindent Institute of Mathematics, Physics and Mechanics, Jadranska 19, SI--1000 Ljubljana.

\noindent e-mail: {\tt franc.forstneric@fmf.uni-lj.si}

\end{document}